\documentclass[12pt]{article}
\usepackage[a4paper,margin=1in]{geometry}

\usepackage{mathtools,amssymb,amsthm}
\mathtoolsset{showonlyrefs=true}

\newtheorem{theorem}{Theorem}[section]
\newtheorem{lemma}[theorem]{Lemma}
\newtheorem{corollary}[theorem]{Corollary}

\newtheorem{fact}[theorem]{Fact}

\theoremstyle{definition}
\newtheorem{definition}[theorem]{Definition}
\newtheorem{example}[theorem]{Example}

\theoremstyle{remark}

\usepackage{enumitem}
\setlist[enumerate,1]{label=(\arabic*)}
\setlist[enumerate,2]{label=(\roman*)}

\numberwithin{equation}{section}
\numberwithin{figure}{section}
\numberwithin{table}{section}

\usepackage{multirow}

\usepackage{hyperref}

\usepackage[style=alphabetic,isbn=false,url=false,doi=false,giveninits=true,maxnames=9,sorting=nyt,backend=bibtex]{biblatex} %arXiv
\renewbibmacro{in:}{}
\bibliography{reference.bib}

\title{Pushforward measures on homogeneous spaces of non-unimodular groups and properties of modular functions}
\author{Takashi Satomi}
\date{\textit{Dedicated to Professor Toshiyuki Kobayashi with gratitude to his continuous encouragement} \\[16pt]
\today} 

\begin{document}

\maketitle

\begin{abstract}

This paper shows that a formula for the pushforward measures on the fiber bundle $ G \to G / H $ of homogeneous space of locally compact groups $ G $ and $ H $ can be written down by using the modular functions.
As a result, we obtain the equality
\begin{align}
\frac{ \Delta_G ( h ) }{ \Delta_{ G / N } ( h N ) }
= \frac{ \Delta_H ( h ) }{ \Delta_{ H / N } ( h N ) }
\end{align}
on the modular functions for any closed subgroups $ N $ and $ H $ of a locally compact group $ G $ with $ N \lhd G $ and $ N \subset H \subset G $ and any $ h \in H $.

\end{abstract}

\noindent
\textbf{Keywords:} locally compact group, homogeneous space, modular function, pushforward measure.

\noindent
\textbf{MSC2020:} Primary 28C10; Secondary 22D15, 43A85, 46G12, 54C35.

\section{Introduction}

We use the notation $ H < G $ and $ H \lhd G $ if $ H $ is a closed subgroup and a closed normal subgroup of a locally compact (Hausdorff) group $ G $, respectively.
This paper presents one can write down the pushforward measures on the fiber bundle $ p^{ G \to G / H } \colon G \to G / H $ of the homogeneous space of locally compact groups $ G $ and $ H $ with $ H < G $ by using the modular functions (Theorem \ref{thm:quotient-pushforward}).
As a result, we obtain an equality on the modular functions (Corollary \ref{cor:modular}).
That is, we have $ \Delta_G ( h ) / \Delta_{ G / N } ( h N ) = \Delta_H ( h ) / \Delta_{ H / N } ( h N ) $ for any $ N < H < G $ with $ N \lhd G $ and any $ h \in H $, where $ \Delta_G $ is the modular function of $ G $.
When $ H = N $ (i.e., $ H \lhd G $), this equality corresponds to a known result $ \Delta_G |_{ H } = \Delta_H $ (Example \ref{ex:normal-modular-function-equal}).

In this paper, we denote by $ C ( X ) $ the vector space of continuous functions $ X \to \mathbb{ C } $, and by $ C_c ( X ) \subset C ( X ) $ the the subspace of continuous functions with compact support.
A complex Radon measure on a locally compact Hausdorff space $ X $ is defined as a linear form of $ C_c ( X ) $.
When $ p \colon X_1 \to X_2 $ is a continuous map between locally compact Hausdorff spaces $ X_1 $ and $ X_2 $, the pushforward (whose precise definition will be given in Definition \ref{def:pushforward}) of the measure $ \mu \in C_c ( X_1 )^* $ is written as $ p_* ( \mu ) \in C_c ( X_2 )^* $.
If $ p $ is proper (i.e., the inverse image of any compact set is also compact), the pushforward $ p_* ( \mu ) $ can be defined for any Radon measure $ \mu \in C_c ( X_1 )^* $.
On the other hand, it needs the properness condition on the support of the measure $ \mu \in C_c ( X_1 )^* $ for the existance of the pushforward $ p_* ( \mu ) $ if $ p $ is not proper.

\begin{definition}

We denote by $ p^{ G \to G / H } \colon G \to G / H $ the canonical quotient map for locally compact groups $ G $ and $ H $ with $ H < G $.
The set $ M_H ( G ) \subset C_c ( G )^* $ of complex Radon measures is defined as
\begin{align}
    M_H ( G ) \coloneqq \{ \phi ( g ) dg \mid \phi \in C ( G ) , \; \text{ $ p^{ G \to G / H } |_{ \mathrm{ supp } ( \phi ) } $ is proper } \}.
\end{align}
Here $ \mathrm{ supp } ( \phi ) $ is the support of $ \phi $, and $ dg $ is a left Haar measure of $ G $ (but $ M_H ( G ) $ does not depend on the choice of $ dg $).

\end{definition}

\begin{example}
\label{ex:MH-H-trivial}

We have $ M_{ \{ e \} } ( G ) = \{ \phi ( g ) dg \mid \phi \in C ( G ) \} $, where $ e \in G $ is the identity element of $ G $.
    
\end{example}

The following theorem is one of the main theorems of this paper:

\begin{theorem}
\label{thm:main}

Let $ \overline{ G } \coloneqq G / N $ and $ \overline{ H } \coloneqq H / N $ for locally compact groups $ G $, $ H $, and $ N $ with $ N < H < G $.
We denote by $ p^{ G \to \overline{ G } } \colon G \to \overline{ G } $ and $ p^{ \overline{ G } \to G / H } \colon \overline{ G } \to G / H $ the canonical maps.

\begin{enumerate}
\item \label{item:main-pushforward-subset}

One has $ M_H ( G ) \subset M_N ( G ) $.

\item \label{item:main-pushforward-well-defined}

The pushforwards $ ( p^{ \overline{ G } \to G / H } \circ p^{ G \to \overline{ G } } )_* ( \mu ) $ and $ p_*^{ \overline{ G } \to G / H } \circ p_*^{ G \to \overline{ G } } ( \mu ) $ can be defined for any $ \mu \in M_H ( G ) $.
Furthermore, one has 
\begin{align}
    ( p^{ \overline{ G } \to G / H } \circ p^{ G \to \overline{ G } } )_* ( \mu ) = p_*^{ \overline{ G } \to G / H } \circ p_*^{ G \to \overline{ G } } ( \mu ). \label{eq:main-pushforward-well-defined-equal}
\end{align}

\item \label{item:main-normal-commutative}

 We assume $ N \lhd G $ and denote by $ p^{ G / H \to \overline{ G } / \overline{ H } } \colon G / H \to \overline{ G } / \overline{ H } $ and $ p^{ \overline{ G } \to \overline{ G } / \overline{ H } } \colon \overline{ G } \to \overline{ G } / \overline{ H } $ the canonical maps.
Then $ p_*^{ G / H \to \overline{ G } / \overline{ H } } \circ p_*^{ G \to G / H } ( \mu ) $ and $ p_*^{ \overline{ G } \to \overline{ G } / \overline{ H } } \circ p_*^{ G \to \overline{ G } } ( \mu ) $ can be defined.
Furthermore, one has 
\begin{align}
    p_*^{ G / H \to \overline{ G } / \overline{ H } } \circ p_*^{ G \to G / H } ( \mu ) = p_*^{ \overline{ G } \to \overline{ G } / \overline{ H } } \circ p_*^{ G \to \overline{ G } } ( \mu ). \label{eq:main-normal-commutative-state}
\end{align}

\item \label{item:main-normal-in-tilde}

For $ N \lhd G $, one has $ p_*^{ G \to \overline{ G } } ( \mu ) \in M_{ \overline{ H } } ( \overline{ G } ) $ for any $ \mu \in M_H ( G ) $.
    
\end{enumerate}
    
\end{theorem}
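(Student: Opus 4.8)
The plan is to reduce the entire statement to a short list of elementary facts about proper maps together with two general properties of pushforward measures that I expect to have from the earlier part of the paper. The proper-map facts are: (a) if $q\circ p$ is proper then $p$ is proper; (b) if $q\circ p$ is proper and $p$ is surjective then $q$ is proper; (c) the restriction of a proper map to a closed subset is proper; and (d) a proper map into a locally compact Hausdorff space has closed image. (With the paper's definition of properness---inverse images of compacta are compact---all four are short point-set arguments.) The two pushforward inputs are the support estimate $\mathrm{supp}(p_*\mu)\subseteq\overline{p(\mathrm{supp}\,\mu)}$ and the functoriality $(q\circ p)_*(\mu)=q_*(p_*(\mu))$, valid whenever every pushforward occurring is defined. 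The structural observations driving the proof are the factorization $p^{G\to G/H}=p^{\overline G\to G/H}\circ p^{G\to\overline G}$ and, when $N\lhd G$, the commutative square $p^{G/H\to\overline G/\overline H}\circ p^{G\to G/H}=p^{\overline G\to\overline G/\overline H}\circ p^{G\to\overline G}$, in which $p^{G/H\to\overline G/\overline H}$ is the canonical homeomorphism $(G/N)/(H/N)\cong G/H$. Item \ref{item:main-pushforward-subset} is then immediate: for $\phi(g)\,dg\in M_H(G)$ the proper map $p^{G\to G/H}|_{\mathrm{supp}(\phi)}$ factors through $p^{G\to\overline G}|_{\mathrm{supp}(\phi)}$, so the latter is proper by (a), which is exactly membership of $\phi(g)\,dg$ in $M_N(G)$.

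For items \ref{item:main-pushforward-well-defined} and \ref{item:main-normal-commutative} I treat the two pairs of iterated pushforwards in parallel. Write $\mu=\phi(g)\,dg\in M_H(G)$ and $A=\mathrm{supp}(\phi)$. The composite $(p^{\overline G\to G/H}\circ p^{G\to\overline G})_*(\mu)$ equals $p^{G\to G/H}_*(\mu)$ and exists because $p^{G\to G/H}|_A$ is proper; similarly $p^{G/H\to\overline G/\overline H}_*\circ p^{G\to G/H}_*(\mu)$ exists since $p^{G\to G/H}|_A$ is proper and $p^{G/H\to\overline G/\overline H}$ is a homeomorphism, hence proper. For the iterate $p^{\overline G\to G/H}_*\circ p^{G\to\overline G}_*(\mu)$: the inner pushforward exists by item \ref{item:main-pushforward-subset}; by the support estimate its support lies in $p^{G\to\overline G}(A)$, which is closed by (d) since $p^{G\to\overline G}|_A$ is proper; and composing $p^{\overline G\to G/H}|_{p^{G\to\overline G}(A)}$ with the proper surjection $p^{G\to\overline G}|_A\colon A\to p^{G\to\overline G}(A)$ recovers $p^{G\to G/H}|_A$, so by (b) the first of these is proper and by (c) its restriction to the support of the inner pushforward is proper, whence the outer pushforward is defined. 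The iterate $p^{\overline G\to\overline G/\overline H}_*\circ p^{G\to\overline G}_*(\mu)$ is handled identically, with the commutative square in place of the factorization: composing $p^{\overline G\to\overline G/\overline H}|_{p^{G\to\overline G}(A)}$ with $p^{G\to\overline G}|_A$ gives $p^{G/H\to\overline G/\overline H}\circ p^{G\to G/H}|_A$, which is proper. Finally, equations \eqref{eq:main-pushforward-well-defined-equal} and \eqref{eq:main-normal-commutative-state} follow from functoriality of the pushforward applied to the factorization and to the commutative square, all the pushforwards involved---including those of the composite maps---having just been shown to exist.

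For item \ref{item:main-normal-in-tilde} I invoke the explicit form of the pushforward along the quotient by the normal subgroup $N$---the Weil-type formula, in the version obtained earlier in this paper---which yields $p^{G\to\overline G}_*(\phi(g)\,dg)=\psi(\bar g)\,d\bar g$ with $\psi\in C(\overline G)$, $\psi(gN)=\int_N\phi(gn)\,dn$ for suitably normalized Haar measures on $N$ and $\overline G$; this uses $\Delta_G|_N=\Delta_N$ for $N\lhd G$ (Example \ref{ex:normal-modular-function-equal}). Since $\psi(gN)\neq 0$ forces $gN\cap A\neq\emptyset$, one gets $\mathrm{supp}(\psi)\subseteq p^{G\to\overline G}(A)$, a closed set on which $p^{\overline G\to\overline G/\overline H}$ is proper by the computation carried out for item \ref{item:main-normal-commutative}; by (c) its restriction to $\mathrm{supp}(\psi)$ is proper, so $\psi(\bar g)\,d\bar g\in M_{\overline H}(\overline G)$. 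The step I expect to need the most care is not the proper-map bookkeeping but pinning down the quoted general facts in the precise generality the four items require---namely the support estimate and functoriality for composites that need not be proper, and the fact that the Weil-type formula produces a \emph{continuous} density $\psi$ under only the properness hypothesis on $\mathrm{supp}(\phi)$ rather than compactness of support.
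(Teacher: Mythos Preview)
Your proposal is correct and follows essentially the same approach as the paper: the paper packages your proper-map facts (a)--(d), the support estimate, and functoriality into Lemmas about the sets $\mathcal{M}(p,X)$ (its Lemmas on $\mathrm{supp}(p_*\mu)\subset p(\mathrm{supp}\,\mu)$ and on $(p^{2,3}\circ p^{1,2})_*=p^{2,3}_*\circ p^{1,2}_*$), identifies $M_H(G)=\mathcal{M}(p^{G\to G/H},G)\cap M_{\{e\}}(G)$, and records the Weil-type formula as a separate lemma; the theorem then follows exactly along the lines you sketch, including the use of the homeomorphism $G/H\cong\overline G/\overline H$ in part \ref{item:main-normal-commutative}. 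The only cosmetic difference is that where you invoke your fact (b) on a surjective restriction, the paper instead checks directly that $(p^{2,3})^{-1}(K)\cap\mathrm{supp}(p^{1,2}_*\mu)\subset p^{1,2}\bigl((p^{2,3}\circ p^{1,2})^{-1}(K)\cap\mathrm{supp}\,\mu\bigr)$.
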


In addition, we can write down the pushforward measure of the fiber bundle $ p^{ G  \to G / H } $ by using the modular functions as follows:

\begin{theorem}
    \label{thm:quotient-pushforward}

Let $ G $, $ H $, $ p^{ G \to G / H } $ be as in Theorem \ref{thm:main}.
We write $ dg $ as a left Haar measure and $ \Delta_G $ as the modular function of $ G $.
Similarly, we write $ dh $ as a left Haar measure and $ \Delta_H $ as the modular function of $ H $.
Suppose $ \phi \in C ( G ) $ satisfies $ p^{ G \to G / H }|_{ \mathrm{ supp } ( \phi ) } $ is proper.
Then
\begin{align}
    \int_{ G / H } \int_{ H } \nu ( g h ) dh d p_*^{ G \to G / H } ( \phi ( g ) dg )
    = \int_{ G } \nu ( g ) \int_{ H } \phi ( g h ) \frac{ \Delta_G ( h ) }{ \Delta_H ( h ) } dh dg \label{eq:quotient-pushforward-state}
\end{align}
holds for any $ \nu \in C_c ( G ) $.
    
\end{theorem}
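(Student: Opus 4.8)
The plan is to unravel the definition of the pushforward on the left of \eqref{eq:quotient-pushforward-state}, reduce the assertion to an identity that lives entirely on $G$ and $H$, and then prove that identity by Fubini's theorem together with the standard change-of-variables formulas for a left Haar measure under right translation and under inversion.

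First I would note that $g \mapsto \int_H \nu(gh)\,dh$ is constant on each left coset $gH$ --- this is just left-invariance of $dh$ --- so it defines a function $\nu^H \colon G/H \to \mathbb{C}$; since $\nu \in C_c(G)$, one checks in the usual way that $\nu^H \in C_c(G/H)$, with $\mathrm{supp}(\nu^H) \subseteq p^{G \to G/H}(\mathrm{supp}(\nu))$. As $p^{G \to G/H}|_{\mathrm{supp}(\phi)}$ is proper, $\phi(g)\,dg$ lies in $M_H(G)$, so $p_*^{G \to G/H}(\phi(g)\,dg) \in C_c(G/H)^*$ is defined, and evaluating it on $\nu^H$ gives, straight from the definition of the pushforward,
\[
\int_{G/H} \int_H \nu(gh)\,dh \; d p_*^{G \to G/H}(\phi(g)\,dg)
= \int_G \Bigl( \int_H \nu(gh)\,dh \Bigr) \phi(g)\,dg .
\]
The right-hand integral is over a genuinely compact piece of $G$: its integrand vanishes outside $K \coloneqq \mathrm{supp}(\phi) \cap (p^{G \to G/H})^{-1}\bigl(p^{G \to G/H}(\mathrm{supp}(\nu))\bigr)$, which is compact because $p^{G \to G/H}(\mathrm{supp}(\nu))$ is compact and $p^{G \to G/H}|_{\mathrm{supp}(\phi)}$ is proper.

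It then remains to establish the purely group-theoretic identity
\[
\int_G \int_H \nu(gh)\,\phi(g)\,dh\,dg
= \int_G \nu(g) \int_H \phi(gh)\,\frac{\Delta_G(h)}{\Delta_H(h)}\,dh\,dg .
\]
On the left I would first apply Fubini to exchange the two integrals, which is legitimate because $(g,h) \mapsto \nu(gh)\phi(g)$ is continuous and supported in the compact set $K \times \bigl(K^{-1}\mathrm{supp}(\nu) \cap H\bigr)$. In the resulting inner integral over $G$, substituting $g \mapsto gh^{-1}$ and using $\int_G F(gt)\,dg = \Delta_G(t)^{-1}\int_G F(g)\,dg$ rewrites $\int_G \nu(gh)\phi(g)\,dg$ as $\Delta_G(h)^{-1}\int_G \nu(g)\phi(gh^{-1})\,dg$. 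Then, in the outer integral over $H$, substituting $h \mapsto h^{-1}$ and using $\int_H F(h^{-1})\,dh = \int_H F(h)\Delta_H(h)^{-1}\,dh$ together with $\Delta_G(h^{-1}) = \Delta_G(h)^{-1}$ replaces $\phi(gh^{-1})$ by $\phi(gh)$ and turns the accumulated modular weight into exactly $\Delta_G(h)/\Delta_H(h)$. A second application of Fubini --- justified as before, since after the substitutions the integrand is supported in $\mathrm{supp}(\nu) \times \bigl(\mathrm{supp}(\nu)^{-1}K \cap H\bigr)$ --- moves the $G$-integral back to the outside and yields the right-hand side, completing the proof.

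Once the Haar normalizations are fixed the manipulation of $\Delta_G$ and $\Delta_H$ is essentially forced, so I expect the main obstacle to be bookkeeping rather than ideas: one must check that the pushforward pairing in the first step and the two uses of Fubini are all legitimate even though $\phi$ is merely continuous, not compactly supported. Every such point is governed by the one hypothesis that $p^{G \to G/H}|_{\mathrm{supp}(\phi)}$ is proper --- precisely what guarantees that multiplying $\phi$ by anything supported over a compact subset of $G/H$, be it $\nu$ or $\nu^H \circ p^{G \to G/H}$, produces a compactly supported function on $G$. Keeping this properness argument intact through the changes of variables, which move supports around, is the part I would handle with the most care.
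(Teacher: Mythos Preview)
Your proposal is correct and follows essentially the same route as the paper: unwind the pushforward to an integral over $G$, apply Fubini, translate $g\mapsto gh^{-1}$ to pick up $\Delta_G(h)^{-1}$, then invert $h\mapsto h^{-1}$ on $H$ to produce the factor $\Delta_G(h)/\Delta_H(h)$. The paper's proof is terser---it invokes Fubini without spelling out the compact-support bookkeeping you carefully supply---but the sequence of manipulations is identical.
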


As a result, we obtain a property of the modular functions as follows:

\begin{corollary}
    \label{cor:modular}

Let $ G $, $ H $, $ N $, $ \overline{ G }$, and $ \overline{ H }$ be as in Theorem \ref{thm:main} with $ N \lhd G $.
Then
\begin{align}
    \frac{ \Delta_G ( h ) }{ \Delta_{ \overline{ G } } ( \overline{ h } ) }
    = \frac{ \Delta_H ( h ) }{ \Delta_{ \overline{ H } } ( \overline{ h } ) } \label{eq:modular-state}
\end{align}
holds for any $ h \in H $, where $ \overline{ h } \coloneqq p^{ G \to \overline{ G } } ( h ) $.

\end{corollary}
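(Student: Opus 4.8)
Here is how I would prove Corollary~\ref{cor:modular}.

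The plan is to compute, for a measure $\mu=\phi(g)\,dg\in M_H(G)$, the pushforward $p_*^{G\to G/H}(\mu)$ along two routes and compare. For a closed subgroup $K<L$ and $f\in C_c(L)$ write $(\mathcal{A}_K f)(gK)\coloneqq\int_K f(gk)\,dk$, so that $\mathcal{A}_K\colon C_c(L)\to C_c(L/K)$ is the usual averaging operator. With this notation Theorem~\ref{thm:quotient-pushforward} reads
\[
\int_{G/H}(\mathcal{A}_H\nu)\,d p_*^{G\to G/H}(\mu)=\int_G\nu(g)\int_H\phi(gh)\frac{\Delta_G(h)}{\Delta_H(h)}\,dh\,dg
\]
for every $\nu\in C_c(G)$. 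For the second route, the canonical map $p^{G/H\to\overline{G}/\overline{H}}$ is a homeomorphism (because $N\subset H$), so $p_*^{G/H\to\overline{G}/\overline{H}}$ is an invertible transport of measures, and Theorem~\ref{thm:main} tells us that $M_H(G)\subset M_N(G)$, that $p_*^{G\to G/H}(\mu)$ is the transport to $G/H$ of $p_*^{\overline{G}\to\overline{G}/\overline{H}}\bigl(p_*^{G\to\overline{G}}(\mu)\bigr)$, and that $p_*^{G\to\overline{G}}(\mu)\in M_{\overline{H}}(\overline{G})$.

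Next I would make $p_*^{G\to\overline{G}}(\mu)$ explicit. Since $N\lhd G$, Example~\ref{ex:normal-modular-function-equal} gives $\Delta_G|_N=\Delta_N$, so for a suitably normalized left Haar measure $d\overline{g}$ on $\overline{G}$ the classical Weil formula $\int_G f\,dg=\int_{\overline{G}}(\mathcal{A}_N f)\,d\overline{g}$ holds (this is also the special case $H=N$ of Theorem~\ref{thm:quotient-pushforward}); combined with Definition~\ref{def:pushforward} this identifies $p_*^{G\to\overline{G}}(\mu)$ with $\psi(\overline{g})\,d\overline{g}$, where $\psi(\overline{g})\coloneqq\int_N\phi(gn)\,dn$. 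Applying Theorem~\ref{thm:quotient-pushforward} to the pair $(\overline{G},\overline{H})$ with this measure (legitimate by the membership $p_*^{G\to\overline{G}}(\mu)\in M_{\overline{H}}(\overline{G})$ just recalled) and with the test function $\mathcal{A}_N\nu\in C_c(\overline{G})$, and using the identity $\mathcal{A}_{\overline{H}}(\mathcal{A}_N\nu)=(\mathcal{A}_H\nu)\circ(p^{G/H\to\overline{G}/\overline{H}})^{-1}$ — which is the Weil formula for $H\to\overline{H}$, valid since $N\lhd G$ forces $N\lhd H$ and hence $\Delta_H|_N=\Delta_N$ — the second route yields
\[
\int_{G/H}(\mathcal{A}_H\nu)\,d p_*^{G\to G/H}(\mu)=\int_{\overline{G}}(\mathcal{A}_N\nu)(\overline{g})\int_{\overline{H}}\psi(\overline{g}\,\overline{h})\frac{\Delta_{\overline{G}}(\overline{h})}{\Delta_{\overline{H}}(\overline{h})}\,d\overline{h}\,d\overline{g}.
\]

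I would then unwind the right-hand side in three moves: substitute $\psi(\overline{g}\,\overline{h})=\int_N\phi(ghn)\,dn$; fold $\int_{\overline{H}}\int_N$ back into $\int_H$ using the Weil formula for $H\to\overline{H}$ (the modular factor $\overline{h}\mapsto\Delta_{\overline{G}}(\overline{h})/\Delta_{\overline{H}}(\overline{h})$ is a function of $hN$, so it passes through); and fold $\int_{\overline{G}}(\mathcal{A}_N\nu)(\cdots)$ back into $\int_G\nu(g)(\cdots)\,dg$ using the Weil formula for $G\to\overline{G}$. This rewrites the right-hand side as $\int_G\nu(g)\int_H\phi(gh)\,\Delta_{\overline{G}}(\overline{h})/\Delta_{\overline{H}}(\overline{h})\,dh\,dg$ with $\overline{h}=hN$, and comparison with the first route gives
\[
\int_G\nu(g)\int_H\phi(gh)\left(\frac{\Delta_G(h)}{\Delta_H(h)}-\frac{\Delta_{\overline{G}}(\overline{h})}{\Delta_{\overline{H}}(\overline{h})}\right)\,dh\,dg=0
\]
for every $\nu\in C_c(G)$ and every $\phi$ with $\phi(g)\,dg\in M_H(G)$, in particular for every $\phi\in C_c(G)$. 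Finally, since $\nu$ is arbitrary and $g\mapsto\int_H\phi(gh)(\cdots)\,dh$ is continuous, the inner integral vanishes for every $g\in G$; specializing to $g=e$ and letting $\phi$ range over the functions of $C_c(G)$ whose restrictions to $H$ exhaust $C_c(H)$, the continuity of the modular functions forces $\Delta_G(h)/\Delta_H(h)=\Delta_{\overline{G}}(\overline{h})/\Delta_{\overline{H}}(\overline{h})$ for every $h\in H$, which is \eqref{eq:modular-state} after cross-multiplying. I expect the main obstacle to be bookkeeping rather than any single deep step: one must fix mutually compatible normalizations of the five Haar measures on $G$, $N$, $H$, $\overline{G}$, $\overline{H}$ and track the densities faithfully through the three applications of the Weil formula and the identification $\overline{G}/\overline{H}\cong G/H$, while the structural facts that make the two routes meet — associativity of the pushforward, $M_H(G)\subset M_N(G)$, and $p_*^{G\to\overline{G}}(\mu)\in M_{\overline{H}}(\overline{G})$ — are precisely what Theorem~\ref{thm:main} supplies.
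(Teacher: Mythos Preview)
Your argument is correct and uses the same structural ingredients as the paper's proof --- Theorem~\ref{thm:main} (commutativity and the inclusion $p_*^{G\to\overline{G}}(M_H(G))\subset M_{\overline{H}}(\overline{G})$), Theorem~\ref{thm:quotient-pushforward} applied at both the $(G,H)$ and $(\overline{G},\overline{H})$ levels, and the Weil formula (Fact~\ref{fact:quotient-Haar}) for $G\to\overline{G}$ and $H\to\overline{H}$ --- but it is organized differently from the paper's own proof.

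The paper does not compare two integral formulas for a fixed $\mu$ and then separate variables. Instead it studies how the composite pushforward transforms under \emph{right translation} by $h'\in H$: it shows
\[
p_*^{\overline{G}\to\overline{G}/\overline{H}}\circ p_*^{G\to\overline{G}}(\phi(gh')\,dg)
=\frac{\Delta_{\overline{H}}(\overline{h'})}{\Delta_{\overline{G}}(\overline{h'})\,\Delta_H(h')}\;
p_*^{\overline{G}\to\overline{G}/\overline{H}}\circ p_*^{G\to\overline{G}}(\phi(g)\,dg),
\]
specializes this to $N=\{e\}$ to get $p_*^{G\to G/H}(\phi(gh')\,dg)=\Delta_G(h')^{-1}\,p_*^{G\to G/H}(\phi(g)\,dg)$, and then equates the two scaling factors via Theorem~\ref{thm:main}\ref{item:main-normal-commutative}. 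What this buys is that the modular identity drops out as a comparison of \emph{constants} multiplying a single nonzero measure, so there is no need for your final step of varying $\nu$, evaluating at $g=e$, and invoking Tietze extension to realize every element of $C_c(H)$ as a restriction from $C_c(G)$. Your route, on the other hand, is arguably more direct in spirit --- two applications of Theorem~\ref{thm:quotient-pushforward} followed by the two Weil formulas --- and makes the role of the commutative square $G\to\overline{G}\to\overline{G}/\overline{H}\cong G/H$ completely explicit. One small remark: you do not actually need to invoke Example~\ref{ex:normal-modular-function-equal} to justify the Weil formula, since Fact~\ref{fact:quotient-Haar} is stated directly for normal $N$; citing it there is harmless but unnecessary.
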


Corollary \ref{cor:modular} has been essentially known in the case of $ H = N $.
The proof of Corollary \ref{cor:modular} given later can be regarded as a generalization of the following example:

\begin{example}[Hewitt--Ross {\cite[Remark 15.23]{MR551496}}]
\label{ex:normal-modular-function-equal}

When $ H = N $ (in particular, $ H \lhd G $), we have $ \Delta_{ \overline{ G } } ( \overline{ h } ) = \Delta_{ \overline{ H } } ( \overline{ h } ) = 1 $.
Thus, Corollary \ref{cor:modular} implies $ \Delta_{ G } ( h ) = \Delta_{ H } ( h ) $ in this case.
    
\end{example}

Here is the organization of this paper.
In Section \ref{sec:pushforward}, we see the definition and properties of the pushforward measure, and we prove Theorem \ref{thm:main}.
In Section \ref{sec:proof-modular}, we prove Theorem \ref{thm:quotient-pushforward} and Corollary \ref{cor:modular} by using Theorem \ref{thm:main}.

\section{Pushforward measure}
\label{sec:pushforward}

In this section, we prove Theorem \ref{thm:main}.
In Subsection \ref{subsec:pushforward-definition}, we review the definition of the pushforward measure (Definition \ref{def:pushforward}) on locally compact Hausdorff spaces in a general setting and show several properties of this measure to prove Theorem \ref{thm:main}.
In Subsection \ref{subsec:pushforward-apply}, we apply these properties to the case of homogeneous spaces and complete the proof of Theorem \ref{thm:main}.

\subsection{Definition and properties of the pushforward measure}
\label{subsec:pushforward-definition}

In this subsection, we review the definition of the pushforward measure for locally compact Hausdorff spaces.
In addition, some properties of such a measure is also discussed.
For any continuous map $ p \colon X_1 \to X_2 $ between locally compact Hausdorff spaces $ X_1 $ and $ X_2 $, the set $ \mathcal{ M } ( p , X_1 ) $ is defined as
\begin{align}
   \mathcal{ M } ( p , X_1 )
    \coloneqq \{ \mu \in C_c ( X_1 )^* \mid \text{ $ p |_{ \mathrm{ supp } ( \mu ) }$ is proper } \}.
\end{align}
For any $ \mu \in\mathcal{ M } ( p , X_1 ) $ and $ \alpha_2 \in C_c ( X_2 ) $, there exists $ \alpha_1 \in C_c ( X_1 ) $ such that
\begin{align}
    \alpha_1 ( x ) = \alpha_2 \circ p ( x ) \label{eq:extention}
\end{align}
for any $x \in \mathrm{ supp } ( \mu ) $ by the Tietze extension theorem since $ \mathrm{ supp } ( \mu ) \cap \mathrm{ supp } ( \alpha_2 \circ p ) $ is compact.
In this case, the integration
\begin{align}
    \int_{ X_1 } \alpha_1 \; d \mu
\end{align}
does not depend on the choice of $ \alpha_1 $ by the definition of $ \mathrm{ supp } ( \mu ) $.
Thus, the pushforward measure is defined as follows:

\begin{definition}
    \label{def:pushforward}

Let $ p \colon X_1 \to X_2 $ be a continuous map between locally compact Hausdorff spaces $ X_1 $ and $ X_2 $.
Then the map $ p_* \colon\mathcal{ M } ( p , X_1 ) \to C_c ( X_2 )^* $ is defined as
\begin{align}
\int_{ X_2 } \alpha_2 \; d p_* ( \mu )
   \coloneqq \int_{ X_1 } \alpha_1 \; d \mu & & 
   (\mu \in \mathcal{ M } ( p , X_1 ), \; \alpha_2 \in C_c ( X_2 )),
\end{align}
where $ \alpha_1 \in C_c ( X_1 ) $ satisfies \eqref{eq:extention} for any $x \in \mathrm{ supp } ( \mu ) $.
This measure $ p_* ( \mu ) $ is called the pushforward of $ \mu $.

\end{definition}

The support of the pushforward measure is contained in the image of the original measure as follows:

\begin{lemma}
    \label{lem:pushforward-support}

Let $ X_1 $, $ X_2 $, and $ p $ be as in Definition \ref{def:pushforward}.
Then 
\begin{align}
    \mathrm{ supp } ( p_* ( \mu ) ) \subset p ( \mathrm{ supp } ( \mu ) ) \label{eq:pushforward-support-state}
\end{align}
holds for any $ \mu \in \mathcal{ M } ( p , X_1 ) $.
  
\end{lemma}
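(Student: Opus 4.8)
The plan is to show that the open set $X_2 \setminus p(\mathrm{supp}(\mu))$ is disjoint from $\mathrm{supp}(p_*(\mu))$, which (since both sides of \eqref{eq:pushforward-support-state} are closed) is equivalent to the assertion. The two ingredients are: the closedness of $p(\mathrm{supp}(\mu))$, and the definitions of the pushforward and of the support of a Radon measure.

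First I would record the topological fact that a continuous proper map into a locally compact Hausdorff space is closed, and apply it to $p|_{\mathrm{supp}(\mu)} \colon \mathrm{supp}(\mu) \to X_2$, whose domain is closed in $X_1$. Concretely, for a point $y$ in the closure of $p(\mathrm{supp}(\mu))$, pick a compact neighborhood $K$ of $y$ in $X_2$; then $p^{-1}(K) \cap \mathrm{supp}(\mu)$ is compact by properness, so its image $p(p^{-1}(K) \cap \mathrm{supp}(\mu))$ is compact, hence closed, and it contains $p(\mathrm{supp}(\mu)) \cap \mathrm{int}(K)$, which accumulates at $y$; therefore $y \in p(\mathrm{supp}(\mu))$. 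Thus $p(\mathrm{supp}(\mu))$ is closed in $X_2$.

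Next, take $x_2 \in X_2 \setminus p(\mathrm{supp}(\mu))$ and, using closedness, an open neighborhood $U$ of $x_2$ with $U \cap p(\mathrm{supp}(\mu)) = \emptyset$. For any $\alpha_2 \in C_c(X_2)$ with $\mathrm{supp}(\alpha_2) \subset U$, the composition $\alpha_2 \circ p$ vanishes on $\mathrm{supp}(\mu)$, since $p$ maps $\mathrm{supp}(\mu)$ into $p(\mathrm{supp}(\mu))$, which misses $\mathrm{supp}(\alpha_2)$. Hence in Definition \ref{def:pushforward} one may choose $\alpha_1 = 0 \in C_c(X_1)$, so that $\int_{X_2} \alpha_2 \, dp_*(\mu) = \int_{X_1} 0 \, d\mu = 0$. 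As this holds for every such $\alpha_2$, the measure $p_*(\mu)$ vanishes on the neighborhood $U$ of $x_2$, whence $x_2 \notin \mathrm{supp}(p_*(\mu))$. Taking complements yields \eqref{eq:pushforward-support-state}.

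The only mildly delicate step is the closedness of $p(\mathrm{supp}(\mu))$; once that is established, the remainder is a direct unwinding of the definitions, and I anticipate no further obstacle.
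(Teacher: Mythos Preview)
Your proof is correct and follows essentially the same approach as the paper: both establish that $p(\mathrm{supp}(\mu))$ is closed (proper maps into locally compact Hausdorff spaces are closed) and then use the test-function characterization of the support. The only cosmetic difference is that you argue the contrapositive (a point outside $p(\mathrm{supp}(\mu))$ has a neighborhood on which $p_*(\mu)$ vanishes), whereas the paper starts from $x_2 \in \mathrm{supp}(p_*(\mu))$ and produces a point of $p(\mathrm{supp}(\mu))$ in every neighborhood; these are the same argument read in opposite directions.
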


\begin{proof}

Let $ x_2 \in \mathrm{ supp } ( p_* ( \mu ) ) $.
Since $ p |_{ \mathrm{ supp } ( \mu ) } $ is a proper map to the locally compact Hausdorff space $ X_2 $ by $ \mu \in \mathcal{ M } ( p , X_1 ) $, the image $ p ( \mathrm{ supp } ( \mu ) ) $ of the closed set $ \mathrm{ supp } ( \mu ) $ is also closed.
Thus, it suffices to show $ p ( \mathrm{ supp } ( \mu ) ) \cap U \neq \emptyset $ for any open neighborhood $ U $ of $ x_2 $.
Since there exists $ \alpha_2 \in C_c ( X_2 ) $ such that $ \mathrm{ supp } ( \alpha_2 ) \subset U $ and
\begin{align}
    \int_{ X_2 } \alpha_2 \; d p_* ( \mu ) \neq 0
\end{align}
by $ x_2 \in \mathrm{ supp } ( p_* ( \mu ) ) $, there exists $\alpha_1 \in C_c ( X_1 ) $ such that
\begin{align}
    \int_{ X_1 } \alpha_1 \; d \mu \neq 0
\end{align}
and \eqref{eq:extention} hold for any $ x \in \mathrm{ supp } ( \mu ) $.
Thus, there exists $ x_1 \in \mathrm{ supp } ( \mu ) $ such that $ \alpha_1 ( x_1 ) \neq 0 $ and hence $ \alpha_2 \circ p ( x_1 ) \neq 0 $.
Then
\begin{align}
    p ( x_1 ) \in p ( \mathrm{ supp } ( \mu ) ) \cap \mathrm{ supp } ( \alpha_2 ) \subset p ( \mathrm{ supp } ( \mu ) ) \cap U
\end{align}
holds and hence we obtain \eqref{eq:pushforward-support-state}.
\end{proof}

Now we show the following lemma to prove Theorem \ref{thm:main}:

\begin{lemma}
    \label{lem:pushforward-compose}

Let $ p^{ 1 , 2 } \colon X_1 \to X_2 $ and $ p^{ 2 , 3 } \colon X_2 \to X_3 $ be continuous maps between locally compact Hausdorff spaces.

\begin{enumerate}
    \item \label{item:pushforward-compose-subset}
    
    One has $ \mathcal{ M } ( p^{ 2 , 3 } \circ p^{ 1 , 2 } , X_1 ) \subset\mathcal{ M } ( p^{ 1 , 2 } , X_1 ) $.

\item \label{item:pushforward-compose-image}

One has $ p_*^{ 1 , 2 } ( \mathcal{ M } ( p^{ 2 , 3 } \circ p^{ 1 , 2 } , X_1 ) ) \subset \mathcal{ M } ( p^{ 2 , 3 } , X_2 ) $.

\item \label{item:pushforward-compose-compose}

One has $ ( p^{ 2 , 3 } \circ p^{ 1 , 2 } )_* = p_*^{ 2 , 3 } \circ p_*^{ 1 , 2 } $.
    
\end{enumerate}
    
\end{lemma}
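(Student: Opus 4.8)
The plan is to work directly from the definitions, using Lemma \ref{lem:pushforward-support} as the crucial bridge between the support of a measure and the support of its pushforward. For part \ref{item:pushforward-compose-subset}, suppose $\mu \in \mathcal{M}(p^{2,3} \circ p^{1,2}, X_1)$, so that $(p^{2,3} \circ p^{1,2})|_{\mathrm{supp}(\mu)}$ is proper. I would show $p^{1,2}|_{\mathrm{supp}(\mu)}$ is proper: given a compact $K \subset X_2$, properness of $p^{2,3} \circ p^{1,2}$ on $\mathrm{supp}(\mu)$ is not immediately enough since $p^{2,3}(K)$ need not be compact, so instead I would use that a proper map to a locally compact Hausdorff space is closed and that $(p^{1,2}|_{\mathrm{supp}(\mu)})^{-1}(K)$ is a closed subset of $\mathrm{supp}(\mu)$; then intersecting $K$ with the closed set $p^{1,2}(\mathrm{supp}(\mu))$ and pulling back, one realizes $(p^{1,2}|_{\mathrm{supp}(\mu)})^{-1}(K)$ as a closed subset of the compact set $(p^{2,3} \circ p^{1,2}|_{\mathrm{supp}(\mu)})^{-1}(p^{2,3}(K \cap p^{1,2}(\mathrm{supp}(\mu))))$ — the latter being compact because $K \cap p^{1,2}(\mathrm{supp}(\mu))$ is compact, hence has compact image under the continuous $p^{2,3}$, and the preimage of that compact set under the proper map is compact. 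A closed subset of a compact set is compact, giving properness.

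For part \ref{item:pushforward-compose-image}, I would combine part \ref{item:pushforward-compose-subset} (so that $p^{1,2}_*(\mu)$ is defined) with Lemma \ref{lem:pushforward-support}, which gives $\mathrm{supp}(p^{1,2}_*(\mu)) \subset p^{1,2}(\mathrm{supp}(\mu))$. Since $p^{1,2}(\mathrm{supp}(\mu))$ is closed (proper map, locally compact Hausdorff target), we get that $\mathrm{supp}(p^{1,2}_*(\mu))$ is a closed subset of $p^{1,2}(\mathrm{supp}(\mu))$. Then for a compact $K \subset X_3$, the set $(p^{2,3}|_{\mathrm{supp}(p^{1,2}_*(\mu))})^{-1}(K)$ is contained in $(p^{2,3} \circ p^{1,2}|_{\mathrm{supp}(\mu)})^{-1}(K) $ pushed forward — more precisely it is contained in $p^{1,2}\big((p^{2,3} \circ p^{1,2}|_{\mathrm{supp}(\mu)})^{-1}(K)\big)$, which is compact as the continuous image of a compact set; being also closed, it is compact, so $p^{2,3}$ is proper on $\mathrm{supp}(p^{1,2}_*(\mu))$.

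For part \ref{item:pushforward-compose-compose}, I would unwind the defining formula of Definition \ref{def:pushforward} twice. Fix $\mu \in \mathcal{M}(p^{2,3} \circ p^{1,2}, X_1)$ and $\alpha_3 \in C_c(X_3)$. By part \ref{item:pushforward-compose-image}, $p^{1,2}_*(\mu) \in \mathcal{M}(p^{2,3}, X_2)$, so choose $\alpha_2 \in C_c(X_2)$ agreeing with $\alpha_3 \circ p^{2,3}$ on $\mathrm{supp}(p^{1,2}_*(\mu))$, and then $\alpha_1 \in C_c(X_1)$ agreeing with $\alpha_2 \circ p^{1,2}$ on $\mathrm{supp}(\mu)$; the point is that on $\mathrm{supp}(\mu)$ we then also have $\alpha_1 = \alpha_3 \circ p^{2,3} \circ p^{1,2}$, because $p^{1,2}(\mathrm{supp}(\mu)) \subset \overline{p^{1,2}(\mathrm{supp}(\mu))} = p^{1,2}(\mathrm{supp}(\mu))$ and $\mathrm{supp}(p^{1,2}_*(\mu)) \subset p^{1,2}(\mathrm{supp}(\mu))$ — wait, I need $\alpha_2 = \alpha_3 \circ p^{2,3}$ to hold on $p^{1,2}(\mathrm{supp}(\mu))$, not merely on $\mathrm{supp}(p^{1,2}_*(\mu))$, so I would instead choose $\alpha_2$ to agree with $\alpha_3 \circ p^{2,3}$ on the compact set $p^{1,2}(\mathrm{supp}(\mu)) \cap \mathrm{supp}(\alpha_3 \circ p^{2,3})$, which is legitimate since that set is compact and the integral $\int \alpha_2 \, dp^{1,2}_*(\mu)$ is insensitive to the choice of $\alpha_2$ outside $\mathrm{supp}(p^{1,2}_*(\mu))$. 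With this choice, $\alpha_1$ is a legal representative for computing $(p^{2,3} \circ p^{1,2})_*(\mu)$ applied to $\alpha_3$, and chasing the definitions gives $\int_{X_3} \alpha_3 \, d(p^{2,3} \circ p^{1,2})_*(\mu) = \int_{X_1} \alpha_1 \, d\mu = \int_{X_2} \alpha_2 \, dp^{1,2}_*(\mu) = \int_{X_3} \alpha_3 \, dp^{2,3}_*(p^{1,2}_*(\mu))$.

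The main obstacle I anticipate is bookkeeping the compatibility of the Tietze extensions: one must be careful that the extension $\alpha_2$ chosen to represent the outer pushforward genuinely restricts to $\alpha_3 \circ p^{2,3}$ on enough of $X_2$ that composing with $p^{1,2}$ yields a valid representative $\alpha_1$ for the composite pushforward, and this requires extending over the compact set $p^{1,2}(\mathrm{supp}(\mu)) \cap \mathrm{supp}(\alpha_3 \circ p^{2,3})$ rather than the a priori smaller $\mathrm{supp}(p^{1,2}_*(\mu))$; the properness arguments in parts \ref{item:pushforward-compose-subset} and \ref{item:pushforward-compose-image} are routine once one remembers that proper maps into locally compact Hausdorff spaces are closed and that closed subsets of compact sets are compact.
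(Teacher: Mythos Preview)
Your overall strategy is sound, but there are two issues worth flagging.

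\textbf{Part \ref{item:pushforward-compose-subset}.} You write that ``$p^{2,3}(K)$ need not be compact'', and this drives you into an unnecessarily elaborate detour through $K \cap p^{1,2}(\mathrm{supp}(\mu))$. But $K \subset X_2$ is compact and $p^{2,3}$ is continuous, so $p^{2,3}(K)$ \emph{is} compact. The paper's argument is then one line: $\mathrm{supp}(\mu) \cap (p^{1,2})^{-1}(K)$ is a closed subset of the compact set $\mathrm{supp}(\mu) \cap (p^{2,3}\circ p^{1,2})^{-1}(p^{2,3}(K))$. Your argument still works, but it is fighting a phantom obstruction.

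\textbf{Part \ref{item:pushforward-compose-compose}.} You correctly spot the real difficulty --- that the extension $\alpha_2$ must agree with $\alpha_3 \circ p^{2,3}$ on all of $p^{1,2}(\mathrm{supp}(\mu))$, not merely on $\mathrm{supp}(p^{1,2}_*(\mu))$ --- but your proposed fix does not close the gap. Extending $\alpha_3 \circ p^{2,3}$ from the compact set $C \coloneqq p^{1,2}(\mathrm{supp}(\mu)) \cap \mathrm{supp}(\alpha_3 \circ p^{2,3})$ gives you no control at points of $p^{1,2}(\mathrm{supp}(\mu)) \setminus \mathrm{supp}(\alpha_3 \circ p^{2,3})$, where $\alpha_3 \circ p^{2,3}$ vanishes but your $\alpha_2$ need not. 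Consequently neither $\alpha_1 = \alpha_3 \circ p^{2,3} \circ p^{1,2}$ on $\mathrm{supp}(\mu)$ nor $\alpha_2 = \alpha_3 \circ p^{2,3}$ on $\mathrm{supp}(p^{1,2}_*(\mu))$ is guaranteed. The clean repair is to Tietze-extend from the \emph{closed} set $p^{1,2}(\mathrm{supp}(\mu))$ itself (closed by part \ref{item:pushforward-compose-subset}), on which $\alpha_3\circ p^{2,3}$ already has compact support $C$. The paper avoids the whole issue by reversing the order: it first picks $\alpha_1 \in C_c(X_1)$ agreeing with $\alpha_3 \circ p^{2,3}\circ p^{1,2}$ on $\mathrm{supp}(\mu)$, observes this is constant on $p^{1,2}$-fibres inside $\mathrm{supp}(\mu)$, and thereby \emph{defines} $\alpha_2$ on $p^{1,2}(\mathrm{supp}(\mu))$ before extending; then Lemma \ref{lem:pushforward-support} shows $\alpha_2=\alpha_3\circ p^{2,3}$ on $\mathrm{supp}(p^{1,2}_*(\mu))$ automatically.

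Part \ref{item:pushforward-compose-image} matches the paper's argument.
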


\begin{proof}

\begin{enumerate}
    \item 

Let $ K \subset X_2 $ be a compact set and $ \mu \in \mathcal{ M } ( p^{ 2 , 3 } \circ p^{ 1 , 2 } , X_1 ) $.
Then it suffices to show that $ \mathrm{ supp } ( \mu ) \cap ( p^{ 1 , 2 } )^{ - 1 } ( K ) $ is compact.
We have
\begin{align}
    \mathrm{ supp } ( \mu ) \cap ( p^{ 1 , 2 } )^{ - 1 } ( K )
    \subset \mathrm{ supp } ( \mu ) \cap ( p^{ 2 , 3 } \circ p^{ 1 , 2 } )^{ - 1 } ( p^{ 2 , 3 } ( K ) ).
\end{align}
Since $ p^{ 2 , 3 } ( K ) $ is compact, $ \mathrm{ supp } ( \mu ) \cap ( p^{ 2 , 3 } \circ p^{ 1 , 2 } )^{ - 1 } ( p^{ 2 , 3 } ( K ) ) $ is also compact by $ \mu \in \mathcal{ M } ( p^{ 2 , 3 } \circ p^{ 1 , 2 } , X_1 ) $.
In addition, the sets $ \mathrm{ supp } ( \mu ) $ and $ ( p^{ 1 , 2 } )^{ - 1 } ( K ) $ are closed and hence $ \mathrm{ supp } ( \mu ) \cap ( p^{ 1 , 2 } )^{ - 1 } ( K ) $ is compact.

\item

Let $ \mu \in \mathcal{ M } ( p^{ 2 , 3 } \circ p^{ 1 , 2 } , X_1 ) $ and $ K \subset X_3 $ be a compact set.
It suffices to show that $ ( p^{ 2 , 3 } )^{ - 1 } ( K ) \cap \mathrm{ supp } ( p_*^{ 1 , 2 } ( \mu ) ) $ is compact.
Then
\begin{align}
    ( p^{ 2 , 3 } )^{ - 1 } ( K ) \cap \mathrm{ supp } ( p_*^{ 1 , 2 } ( \mu ) )
    & \subset ( p^{ 2 , 3 } )^{ - 1 } ( K ) \cap p^{ 1 , 2 } ( \mathrm{ supp } ( \mu ) ) \\
    & = p^{ 1 , 2 } ( ( p^{ 2 , 3 } \circ p^{ 1 , 2 } )^{ - 1 } ( K ) \cap \mathrm{ supp } ( \mu ) )
\end{align}
holds by Lemma \ref{lem:pushforward-support}.
Since $ ( p^{ 2 , 3 } \circ p^{ 1 , 2 } )^{ - 1 } ( K ) \cap \mathrm{ supp } ( \mu ) $ is compact by $ \mu \in \mathcal{ M } ( p^{ 2 , 3 } \circ p^{ 1 , 2 } , X_1 ) $, the set $ ( p^{ 2 , 3 } )^{ - 1 } ( K ) \cap \mathrm{ supp } ( p_*^{ 1 , 2 } ( \mu ) ) $ is also compact.

\item 
Let $ \mu \in \mathcal{ M } ( p^{ 2 , 3 } \circ p^{ 1 , 2 } , X_1 ) $ and $ \alpha_3 \in C_c ( X_3 ) $.
Then it suffices to show
\begin{align}
   \int_{ X_3 } \alpha_3 \; d ( p^{ 2 , 3 } \circ p^{ 1 , 2 } )_* ( \mu )
   = \int_{ X_3 } \alpha_3 \; d p_*^{ 2 , 3 } \circ p_*^{ 1 , 2 } ( \mu ).
\end{align}
We note that both sides of this equality can be defined by \ref{item:pushforward-compose-subset} and \ref{item:pushforward-compose-image}.
Then there exists $ \alpha_1 \in C_c ( X_1 ) $ such that $ \alpha_1 ( x_1 ) = \alpha_3 \circ p^{ 2 , 3 } \circ p^{ 1, 2 } ( x_1 ) $ for any $ x_1 \in \mathrm{ supp } ( \mu ) $.
For any $ x_1 , x_1' \in \mathrm{ supp } ( \mu ) $ with $ p^{ 1, 2 } ( x_1 ) = p^{ 1, 2 } ( x_1' ) $, we have
\begin{align}
\alpha_1 ( x_1 )
   = \alpha_3 \circ p^{ 2 , 3 } \circ p^{ 1, 2 } ( x_1 )
   = \alpha_3 \circ p^{ 2 , 3 } \circ p^{ 1, 2 } ( x_1' )
   = \alpha_1 ( x_1' ).
\end{align}
Thus, there exists $ \alpha_2 \in C ( p^{ 1 , 2 } ( \mathrm{ supp } ( \mu ) ) ) $ such that $ \alpha_2 \circ p^{ 1, 2 } ( x_1 ) = \alpha_1 ( x_1 ) $ for any $ x_1 \in \mathrm{ supp } ( \mu ) $.
We can extend $ \alpha_2 $ to $ X_2 \to \mathbb{ C } $ such that $ \alpha_2 \in C_c ( X_2 ) $ by the compactness of $ \mathrm{ supp } ( \alpha_1 ) $ and the Tietze extension theorem.
For any $ x_2 \in \mathrm{ supp } ( p_*^{ 1 , 2 } ( \mu ) ) $, there exists $ x_1 \in \mathrm{ supp } ( \mu ) $ such that $ x_2 = p^{ 1 , 2 } ( x_1 ) $ by Lemma \ref{lem:pushforward-support}.
Then
\begin{align}
    \alpha_3 \circ p^{ 2 , 3 } ( x_2 )
    = \alpha_3 \circ p^{ 2 , 3 } \circ p^{ 1 , 2 } ( x_1 )
    = \alpha_1 ( x_1 )
    = \alpha_2 \circ p^{ 1 , 2 } ( x_1 )
    = \alpha_2 ( x_2 )
\end{align}
holds and hence
\begin{align}
    \int_{ X_3 } \alpha_3 \; d p_*^{ 2 , 3 } \circ p_*^{ 1 , 2 } ( \mu )
    = \int_{ X_2 } \alpha_2 \; d p_*^{ 1 , 2 } ( \mu )
    = \int_{ X_1 } \alpha_1 \; d \mu
    = \int_{ X_3 } \alpha_3 \; d ( p^{ 2 , 3 } \circ p^{ 1 , 2 } )_* ( \mu )
\end{align}
is obtained.
\qedhere

\end{enumerate}
    
\end{proof}

\subsection{Application to homogeneous spaces}
\label{subsec:pushforward-apply}

By applying the properties of pushforward measures in Subsection \ref{subsec:pushforward-definition} to homogeneous spaces, we can prove Theorem \ref{thm:main}.
For this purpose, we use the following lemma:

\begin{lemma}
    \label{lem:MH-represent}

Let $ dg $ be a left Haar measure of a locally compact group $ G $.

\begin{enumerate}
\item \label{item:MH-represent-support}
One has $ \mathrm{ supp } ( \phi ( g ) dg ) = \mathrm{ supp } ( \phi ) $ for any $ \phi \in C ( G ) $.
    
    \item \label{item:MH-represent-explicit}

Let $ H < G $ and $ p^{ G \to G / H } $ be the canonical map.
Then
\begin{align}
    M_H ( G ) = \mathcal{ M } ( p^{ G \to G / H } , G ) \cap M_{ \{ e \} } ( G ).
\end{align}
In particular, the pushforward $ p_*^{ G \to G / H } ( \mu ) $ can be defined for any $ \mu \in M_H ( G ) $.

    \item \label{item:MH-represent-subset}

    Let $ N $, $ \overline{ G } $, and $ p^{ G \to G / H } $ be as in Theorem \ref{thm:main} (which does not necessarily satisfy $ N \lhd G $).
    Then $ \mathcal{ M } ( p^{ G \to G / H } , G ) \subset \mathcal{ M } ( p^{ G \to \overline{ G } } , G ) $.

\end{enumerate}

\end{lemma}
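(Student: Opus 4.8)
The plan is to establish the three parts in order, with part~\ref{item:MH-represent-support} carrying all of the measure-theoretic content and parts~\ref{item:MH-represent-explicit} and~\ref{item:MH-represent-subset} then following by unwinding the definitions and invoking Lemma~\ref{lem:pushforward-compose}.

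For part~\ref{item:MH-represent-support} I would argue directly from the definition of the support of a Radon measure: $ \mathrm{ supp } ( \phi ( g ) dg ) $ is the complement of the largest open set $ U \subseteq G $ on which $ \phi ( g ) dg $ vanishes, i.e.\ with $ \int_G \psi ( g ) \phi ( g ) \, dg = 0 $ for all $ \psi \in C_c ( U ) $. The key input is that a left Haar measure is strictly positive on every nonempty open set: if $ \phi ( g_0 ) \neq 0 $ for some $ g_0 \in U $, choose a small open neighborhood $ V \subseteq U $ of $ g_0 $ on which, by continuity, $ \lvert \phi \rvert $ is bounded below by a positive constant and $ \phi $ takes values in an open half-plane avoiding $ 0 $; then any nonzero nonnegative $ \psi \in C_c ( V ) $ gives $ \int_G \psi \phi \, dg \neq 0 $. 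Hence $ \phi ( g ) dg $ vanishes on an open set precisely when $ \phi $ vanishes there, so the largest such open set is $ G \setminus \mathrm{ supp } ( \phi ) $ and $ \mathrm{ supp } ( \phi ( g ) dg ) = \mathrm{ supp } ( \phi ) $. A minor point to check carefully is the standard fact (implicit in ``the largest open null set'') that the union of all open null sets is again a null set, which follows from a partition-of-unity argument applied to the compact support of an arbitrary test function.

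For part~\ref{item:MH-represent-explicit} I would combine Example~\ref{ex:MH-H-trivial}, which gives $ M_{ \{ e \} } ( G ) = \{ \phi ( g ) dg \mid \phi \in C ( G ) \} $, with part~\ref{item:MH-represent-support}: a measure $ \mu = \phi ( g ) dg \in M_{ \{ e \} } ( G ) $ lies in $ \mathcal{ M } ( p^{ G \to G / H } , G ) $ if and only if $ p^{ G \to G / H } |_{ \mathrm{ supp } ( \mu ) } = p^{ G \to G / H } |_{ \mathrm{ supp } ( \phi ) } $ is proper, which is exactly the condition defining $ M_H ( G ) $. Therefore $ M_H ( G ) = \mathcal{ M } ( p^{ G \to G / H } , G ) \cap M_{ \{ e \} } ( G ) $, and in particular $ M_H ( G ) \subseteq \mathcal{ M } ( p^{ G \to G / H } , G ) $, so $ p_*^{ G \to G / H } ( \mu ) $ is defined for every $ \mu \in M_H ( G ) $ by Definition~\ref{def:pushforward}. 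For part~\ref{item:MH-represent-subset}, since $ N \subseteq H $ the canonical map $ p^{ \overline{ G } \to G / H } \colon \overline{ G } \to G / H $ is well defined and continuous and satisfies $ p^{ G \to G / H } = p^{ \overline{ G } \to G / H } \circ p^{ G \to \overline{ G } } $; applying Lemma~\ref{lem:pushforward-compose}\ref{item:pushforward-compose-subset} with $ p^{ 1 , 2 } = p^{ G \to \overline{ G } } $ and $ p^{ 2 , 3 } = p^{ \overline{ G } \to G / H } $ then yields $ \mathcal{ M } ( p^{ G \to G / H } , G ) \subseteq \mathcal{ M } ( p^{ G \to \overline{ G } } , G ) $.

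I expect the main obstacle to be nothing deeper than the careful handling of the support definition in part~\ref{item:MH-represent-support} (strict positivity of Haar measure together with the ``union of open null sets'' fact); parts~\ref{item:MH-represent-explicit} and~\ref{item:MH-represent-subset} are then purely formal given part~\ref{item:MH-represent-support} and Lemma~\ref{lem:pushforward-compose}.
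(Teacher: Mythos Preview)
Your proposal is correct and follows essentially the same route as the paper: part~\ref{item:MH-represent-support} is exactly the statement that $\mathrm{supp}(dg)=G$ (the paper's one-line justification), which you have unpacked in detail, and parts~\ref{item:MH-represent-explicit} and~\ref{item:MH-represent-subset} are obtained precisely as in the paper from part~\ref{item:MH-represent-support}, Example~\ref{ex:MH-H-trivial}, and Lemma~\ref{lem:pushforward-compose}\ref{item:pushforward-compose-subset} applied to the factorization $p^{G\to G/H}=p^{\overline{G}\to G/H}\circ p^{G\to\overline{G}}$.
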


\begin{proof}

\begin{enumerate}
    \item 

The conclusion is obtained by $ \mathrm{ supp } ( dg ) = G $.

\item

The conclusion is obtained by \ref{item:MH-represent-support} and Example \ref{ex:MH-H-trivial}.

\item

Since $ p^{ G \to G / H } = p^{ \overline{ G } \to G / H } \circ p^{ G \to \overline{ G } } $, we obtain $ \mathcal{ M } ( p^{ G \to G / H } , G ) \subset \mathcal{ M } ( p^{ G \to \overline{ G } } , G ) $ by Lemma \ref{lem:pushforward-compose} \ref{item:pushforward-compose-subset}. 
\qedhere
    
\end{enumerate}
    
\end{proof}

A left Haar measure of quotient groups can be given as the following fact:

\begin{fact}[{\cite[Theorem 2.51]{MR3444405}}]
    \label{fact:quotient-Haar}

For $ G $, $ N $, and $ \overline{ G } $ as in Corollary \ref{cor:modular}, let $ dg $ and $ dn $ be left Haar measures of $ G $ and $ N $, respectively. %Letが続く
Then there exists a left Haar measure $ d \overline{ g } $ of $ \overline{ G } $ such that
\begin{align}
    \int_{ \overline{ G } } \int_{ N } \beta ( g n ) dn d \overline{ g }
    = \int_{ G } \beta ( g ) dg
\end{align}
for any $ \beta \in C_c ( G ) $, where $ \overline{ g } \coloneqq p^{ G \to \overline{ G } } ( g ) $.
    
\end{fact}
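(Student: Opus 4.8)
The plan is to construct a left Haar integral on $ G $ out of iterated integration over $ N $ and $ \overline{ G } $, and then to appeal to the uniqueness of Haar measure. Since $ N $ is a closed normal subgroup, $ \overline{ G } = G / N $ is a locally compact Hausdorff topological group, hence carries a left Haar measure; fix any such $ d \overline{ g } $ for the moment. For $ \beta \in C_c ( G ) $ and $ g \in G $, put $ ( T \beta ) ( g ) \coloneqq \int_N \beta ( g n ) \, dn $. The integrand $ n \mapsto \beta ( g n ) $ is continuous on $ N $ with support contained in the compact set $ g^{ - 1 } \mathrm{ supp } ( \beta ) \cap N $, so this is a well-defined finite integral. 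If $ g' = g n_0 $ with $ n_0 \in N $, then the substitution $ n \mapsto n_0^{ - 1 } n $ and left-invariance of $ dn $ give $ ( T \beta ) ( g n_0 ) = ( T \beta ) ( g ) $, so $ T \beta $ factors through $ p^{ G \to \overline{ G } } $ to a function on $ \overline{ G } $, still denoted $ T \beta $. Its support is contained in the compact set $ p^{ G \to \overline{ G } } ( \mathrm{ supp } ( \beta ) ) $, and $ T \beta $ is continuous on $ \overline{ G } $ (see below); thus $ T \beta \in C_c ( \overline{ G } ) $, and I define $ I ( \beta ) \coloneqq \int_{ \overline{ G } } ( T \beta ) \, d \overline{ g } $. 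Then $ I $ is a positive linear functional on $ C_c ( G ) $, and it is not identically zero: if $ \beta \ge 0 $ and $ \beta ( g_0 ) > 0 $, then $ ( T \beta ) ( p^{ G \to \overline{ G } } ( g_0 ) ) \ge \int_U \beta ( g_0 n ) \, dn > 0 $ for a sufficiently small neighborhood $ U $ of $ e $ in $ N $, whence $ I ( \beta ) > 0 $.

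Next I would verify that $ I $ is left-$ G $-invariant. For $ x \in G $ write $ ( L_x \beta ) ( g ) = \beta ( x^{ - 1 } g ) $. Then $ ( T ( L_x \beta ) ) ( g ) = \int_N \beta ( x^{ - 1 } g n ) \, dn = ( T \beta ) ( x^{ - 1 } g ) $, and since $ N \lhd G $ the quotient map $ p^{ G \to \overline{ G } } $ is a group homomorphism, so as a function on $ \overline{ G } $ this equals $ ( T \beta ) ( \overline{ x }^{ - 1 } \overline{ g } ) = ( L_{ \overline{ x } } ( T \beta ) ) ( \overline{ g } ) $. Hence $ I ( L_x \beta ) = \int_{ \overline{ G } } L_{ \overline{ x } } ( T \beta ) \, d \overline{ g } = \int_{ \overline{ G } } ( T \beta ) \, d \overline{ g } = I ( \beta ) $ by left-invariance of $ d \overline{ g } $. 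By uniqueness of Haar measure on $ G $, there is a constant $ c \in ( 0 , \infty ) $ with $ I ( \beta ) = c \int_G \beta ( g ) \, dg $ for all $ \beta \in C_c ( G ) $; replacing $ d \overline{ g } $ by $ c^{ - 1 } d \overline{ g } $ (still a left Haar measure of $ \overline{ G } $) yields the asserted identity.

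The only non-formal point is the continuity of $ T \beta $ on $ \overline{ G } $, equivalently of $ g \mapsto \int_N \beta ( g n ) \, dn $ on $ G $: for $ g $ ranging over a fixed compact neighborhood $ K $, only the fixed compact subset $ ( K^{ - 1 } \mathrm{ supp } ( \beta ) ) \cap N $ of $ N $ contributes to the integral, so left uniform continuity of $ \beta $ on its (compact) support forces $ ( T \beta ) ( g ) $ to vary uniformly in $ g $; this is the standard argument and is where I expect the only real care is needed, everything else reducing to elementary substitutions and the existence and uniqueness of Haar measure. I also note that normality of $ N $ is used only to make $ \overline{ G } $ a group, so that it possesses its own left Haar measure $ d \overline{ g } $; no compatibility relation between $ \Delta_G $ and $ \Delta_N $ is required here, in contrast with the analogous integration formula over a non-normal closed subgroup.
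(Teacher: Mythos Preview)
Your argument is correct and is precisely the standard proof: define the averaging map $T\beta(g)=\int_N\beta(gn)\,dn$, check it descends to $C_c(\overline{G})$, integrate against a provisional left Haar measure $d\overline{g}$ to obtain a nonzero positive left-invariant functional on $C_c(G)$, and invoke uniqueness of Haar measure to fix the normalization. The continuity argument you sketch is the right one, and your use of normality (to make $\overline{G}$ a group and $p^{G\to\overline{G}}$ a homomorphism) is exactly where it enters.

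There is nothing in the paper to compare against: the statement is recorded as a \emph{Fact} with a citation to \cite[Theorem~2.51]{MR3444405} and is not proved in the paper itself. Your proof is essentially the one found in the cited reference, so in that sense the approaches coincide. One very minor quibble: your closing remark that ``normality of $N$ is used only to make $\overline{G}$ a group'' slightly undersells things, since you also explicitly use that $p^{G\to\overline{G}}$ is a group homomorphism in the left-invariance step; but of course these two uses are really the same fact, so this is not a gap.
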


By using a left Haar measure $ d \overline{ g } $ as in Fact \ref{fact:quotient-Haar}, the pushforward $ p_*^{ G \to \overline{ G } } ( \mu ) $ (which is well-defined by Lemma \ref{lem:MH-represent} \ref{item:MH-represent-explicit}) can be written explicitly for any $ \mu \in M_N ( G ) $ as follows:

\begin{lemma}
    \label{lem:pushforward-represent}

Let $ G $, $ N $, $ \overline{ G } $, $ dg $, $ dn $, and $ d \overline{ g } $ be as in Fact \ref{fact:quotient-Haar}.
Then
\begin{align}
    p_*^{ G \to \overline{ G } } ( \phi ( g ) dg )
    = \int_{ N } \phi ( g n ) dn d \overline{ g } \label{eq:pushforward-represent-state}
\end{align}
for any $\phi \in C ( G ) $ for which $ p^{ G \to \overline{ G } }|_{ \mathrm{ supp } ( \phi ) } $ is proper.
In particular, one has 
\begin{align}
    p_*^{ G \to \overline{ G } } ( M_N ( G ) ) \subset M_{ \overline{ N } } ( \overline{ G } ). \label{eq:pushforward-represent-subset}
\end{align}
 
\end{lemma}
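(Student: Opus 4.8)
The plan is to identify the Radon measure $p_*^{G\to\overline G}(\phi(g)\,dg)$ by testing it against an arbitrary $\alpha\in C_c(\overline G)$ and reducing to Fact \ref{fact:quotient-Haar}. First I would record that the hypothesis on $\phi$ is exactly the statement $\mu\coloneqq\phi(g)\,dg\in M_N(G)$: since $\overline G=G/N$, the maps $p^{G\to\overline G}$ and $p^{G\to G/N}$ coincide, so by Lemma \ref{lem:MH-represent}\ref{item:MH-represent-explicit} the pushforward $p_*^{G\to\overline G}(\mu)$ is well defined, and by Lemma \ref{lem:MH-represent}\ref{item:MH-represent-support} we have $\mathrm{supp}(\mu)=\mathrm{supp}(\phi)$. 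I would also check that the fiberwise integral $\bar g\mapsto\int_N\phi(gn)\,dn$ is well defined: it does not depend on the representative $g$ of $\bar g$ by left-invariance of $dn$, and it is a finite integral because $\{n\in N:gn\in\mathrm{supp}(\phi)\}$ is carried homeomorphically onto the fiber $gN\cap\mathrm{supp}(\phi)$, which is compact by properness of $p^{G\to\overline G}|_{\mathrm{supp}(\phi)}$.

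For the main computation, I would fix $\alpha\in C_c(\overline G)$ and choose, as in Definition \ref{def:pushforward}, a function $\alpha_1\in C_c(G)$ with $\alpha_1(x)=\alpha(p^{G\to\overline G}(x))$ for all $x\in\mathrm{supp}(\phi)$, so that $\int_{\overline G}\alpha\,dp_*^{G\to\overline G}(\mu)=\int_G\alpha_1(g)\phi(g)\,dg$. Since $\alpha_1\phi\in C_c(G)$, Fact \ref{fact:quotient-Haar} applies and gives $\int_G\alpha_1\phi\,dg=\int_{\overline G}\int_N\alpha_1(gn)\phi(gn)\,dn\,d\overline g$. The key point is the pointwise identity $\alpha_1(gn)\phi(gn)=\alpha(\bar g)\phi(gn)$ for all $g\in G$ and $n\in N$: if $gn\in\mathrm{supp}(\phi)$ then $p^{G\to\overline G}(gn)=p^{G\to\overline G}(g)=\bar g$ because $N=\ker p^{G\to\overline G}$, so $\alpha_1(gn)=\alpha(\bar g)$; otherwise $\phi(gn)=0$ and both sides vanish. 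Substituting this and pulling $\alpha(\bar g)$ out of the inner integral yields $\int_{\overline G}\alpha\,dp_*^{G\to\overline G}(\mu)=\int_{\overline G}\alpha(\bar g)\bigl(\int_N\phi(gn)\,dn\bigr)d\overline g$ for all $\alpha\in C_c(\overline G)$, which is \eqref{eq:pushforward-represent-state}.

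For \eqref{eq:pushforward-represent-subset}, note that $\overline N=p^{G\to\overline G}(N)$ is the trivial subgroup of $\overline G$, so by Example \ref{ex:MH-H-trivial} it suffices to show that $\psi(\bar g)\coloneqq\int_N\phi(gn)\,dn$ is continuous on $\overline G$. I would argue locally: given $\bar g_0\in\overline G$, pick a compact neighborhood $\overline K$, set $L\coloneqq(p^{G\to\overline G})^{-1}(\overline K)\cap\mathrm{supp}(\phi)$ (compact by properness), and choose $\chi\in C_c(G)$ with $\chi\equiv 1$ on $L$. Then for $\bar g\in\overline K$ one has $(\chi\phi)(gn)=\phi(gn)$ for every $n\in N$ (if $gn\in\mathrm{supp}(\phi)$ then $gn\in L$, and otherwise $\phi(gn)=0$), so $\psi$ agrees on $\overline K$ with $\bar g\mapsto\int_N(\chi\phi)(gn)\,dn$, which is continuous because $\chi\phi\in C_c(G)$ and the $N$-averaging map $C_c(G)\to C_c(\overline G)$ underlying Fact \ref{fact:quotient-Haar} lands in $C_c(\overline G)$. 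Hence $\psi\in C(\overline G)$ and $p_*^{G\to\overline G}(\mu)=\psi(\bar g)\,d\overline g\in M_{\overline N}(\overline G)$.

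I expect the main obstacle to be the bookkeeping forced by the fact that $\phi$ need not have compact support: Fact \ref{fact:quotient-Haar} cannot be applied to $\phi$ directly, so the argument must be routed through the compactly supported functions $\alpha_1\phi$ and $\chi\phi$, with the properness hypothesis serving throughout to make every fiber integral finite and every cutoff legitimate. The most delicate single step is the continuity of $\psi$ in the last paragraph, which relies on the standard (here only implicitly quoted) fact that the $N$-averaging operator maps $C_c(G)$ into $C_c(\overline G)$.
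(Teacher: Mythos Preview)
Your proof is correct and follows essentially the same route as the paper: test the pushforward against $\alpha\in C_c(\overline G)$, apply Fact~\ref{fact:quotient-Haar} to the compactly supported product, and use $\overline{gn}=\overline g$. The paper writes $\alpha(\overline g)$ in place of your $\alpha_1$ and leaves the continuity of $\psi(\bar g)=\int_N\phi(gn)\,dn$ implicit, so your version is more careful on exactly the points you flagged, but the argument is the same.
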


\begin{proof}

Since
\begin{align}
    \int_{ \overline{ G } } \alpha ( \overline{ g } ) dp_*^{ G \to \overline{ G } } ( \phi ( g ) dg )
    & = \int_{ G } \alpha ( \overline{ g } ) \phi ( g ) dg \\
    & = \int_{ \overline{ G } } \int_{ N } \alpha ( \overline{ g n } ) \phi ( g n ) dn d \overline{ g } \\
    & = \int_{ \overline{ G } } \int_{ N } \alpha ( \overline{ g } ) \phi ( g n ) dn d \overline{ g } \\
    & = \int_{ \overline{ G } } \alpha ( \overline{ g } ) \int_{ N } \phi ( g n ) dn d \overline{ g }
\end{align}
holds for any $ \alpha \in C_c ( \overline{ G } ) $ by Fact \ref{fact:quotient-Haar}, we obtain \eqref{eq:pushforward-represent-state}.
In particular, we have
\begin{align}
    \int_{ N } \phi ( g n ) dn d \overline{ g } \in M_{ \overline{ N } } ( \overline{ G } )
\end{align}
and hence \eqref{eq:pushforward-represent-subset} is obtained.
\end{proof}

Now we prove Theorem \ref{thm:main} by using Lemma \ref{lem:MH-represent} and Lemma \ref{lem:pushforward-represent}.

\begin{proof}[Proof of Theorem \ref{thm:main}]

\begin{enumerate}
    \item 
    
    By Lemma \ref{lem:MH-represent} \ref{item:MH-represent-explicit} and \ref{item:MH-represent-subset}, we obtain
\begin{align}
    M_H ( G )
    = \mathcal{ M } ( p^{ G \to G / H } , G ) \cap M_{ \{ e \} } ( G )
    \subset \mathcal{ M } ( p^{ G \to \overline{ G } } , G ) \cap M_{ \{ e \} } ( G )
    = M_N ( G ).
\end{align}

\item

We have
\begin{align}
    \mu \in M_H ( G )
    \subset \mathcal{ M } ( p^{ G \to G / H } , G )
    = \mathcal{ M } ( p^{ \overline{ G } \to G / H } \circ p^{ G \to \overline{ G } } , G )
\end{align}
by Lemma \ref{lem:MH-represent} \ref{item:MH-represent-explicit}.
Thus, the pushforwards $ ( p^{ \overline{ G } \to G / H } \circ p^{ G \to \overline{ G } } )_* ( \mu ) $ and $ p_*^{ \overline{ G } \to G / H } \circ p_*^{ G \to \overline{ G } } ( \mu ) $ can be defined and we obtain \eqref{eq:main-pushforward-well-defined-equal} by Lemma \ref{lem:pushforward-compose}.

\item

Since
\begin{align}
    \mu
    & \in M_H ( G ) \\
    & \subset \mathcal{ M } ( p^{ G \to G / H } , G ) \\
    & = \mathcal{ M } ( ( p^{ G / H \to \overline{ G } / \overline{ H } } )^{ - 1 } \circ p^{ \overline{ G } \to \overline{ G } / \overline{ H } } \circ p^{ G \to \overline{ G } } , G ) \\
    & \subset \mathcal{ M } ( p^{ \overline{ G } \to \overline{ G } / \overline{ H } } \circ p^{ G \to \overline{ G } } , G )
\end{align}
by Lemma \ref{lem:pushforward-compose} \ref{item:pushforward-compose-subset} and Lemma \ref{lem:MH-represent} \ref{item:MH-represent-explicit}, the pushforward $ p_*^{ \overline{ G } \to \overline{ G } / \overline{ H } } \circ p_*^{ G \to \overline{ G } } ( \mu ) $ can be defined.
Thus, it follows from Lemma \ref{lem:pushforward-compose} and $ p^{ \overline{ G } \to \overline{ G } / \overline{ H } } \circ p^{ G \to \overline{ G } } = p^{ G / H \to \overline{ G } / \overline{ H } } \circ p^{ G \to G / H } $ that $ p_*^{ G / H \to \overline{ G } / \overline{ H } } \circ p_*^{ G \to G / H } ( \mu ) $ can also be defined, and \eqref{eq:main-normal-commutative-state} holds.

\item

It follows from \ref{item:main-pushforward-subset} and Lemma \ref{lem:pushforward-represent} that
\begin{align}
    p_*^{ G \to \overline{ G } } ( \mu )
    \in p_*^{ G \to \overline{ G } } ( M_{ H } ( G ) )
    \subset p_*^{ G \to \overline{ G } } ( M_{ N } ( G ) )
    \subset M_{ \overline{ N } } ( \overline{ G } ).
\end{align}
Thus, we obtain
\begin{align}
    p_*^{ G \to \overline{ G } } ( \mu )
    \in \mathcal{ M } ( p^{ \overline{ G } \to \overline{ G } / \overline{ H } } , G ) \cap M_{ \overline{ N } } ( \overline{ G } )
    = M_{ \overline{ H } } ( \overline{ G } )
\end{align}
by \ref{item:main-normal-commutative} and Lemma \ref{lem:MH-represent} \ref{item:MH-represent-explicit}.
\qedhere

\end{enumerate}
    
\end{proof}

\section{Proof of Theorem \ref{thm:quotient-pushforward} and Corollary \ref{cor:modular}}
\label{sec:proof-modular}

In this section, we prove Theorem \ref{thm:quotient-pushforward} and Corollary \ref{cor:modular}.

\begin{proof}[Proof of Theorem \ref{thm:quotient-pushforward}]

We have
\begin{align}
    \int_{ G / H } \int_{ H } \nu ( g h ) dh d p_*^{ G \to G / H } ( \phi ( g ) dg )
    & = \int_{ G } \int_{ H } \nu ( g h ) dh \phi ( g ) dg \\
    & = \int_{ H } \int_{ G } \nu ( g h ) \phi ( g ) dg dh \\
    & = \int_{ H } \int_{ G } \frac{ \nu ( g ) \phi ( g h^{ - 1 } ) }{ \Delta_G ( h ) } dg dh \\
    & = \int_{ G }  \nu ( g ) \int_{ H } \frac{ \phi ( g h^{ - 1 } ) }{ \Delta_G ( h ) } dh dg
\end{align}
by Fubini's theorem.
Since
\begin{align}
    \int_{ H } \frac{ \phi ( g h^{ - 1 } ) }{ \Delta_G ( h ) } dh
    = \int_{ H } \phi ( g h ) \frac{ \Delta_G ( h ) }{ \Delta_H ( h ) } dh
\end{align}
holds, we obtain \eqref{eq:quotient-pushforward-state}.
\end{proof}

We utilize Theorem \ref{thm:main} and Theorem \ref{thm:quotient-pushforward} together with the following fact to prove Corollary \ref{cor:modular}:

\begin{fact}[{\cite[Proposition 2.50]{MR3444405}}]
\label{fact:quotient-surjective}

Let $ G $, $ H $, $ p^{ G \to G / H } $, $ dh $ be as in Theorem \ref{thm:quotient-pushforward}, and $ \alpha \in C_c ( G / H ) $.
Then there exists $ \nu \in C_c ( G ) $ such that
\begin{align}
   \alpha ( g H ) = \int_H \nu ( g h ) dh
\end{align}
holds for any $ g \in G $.
    
\end{fact}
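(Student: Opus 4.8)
The statement to establish is that the fibre-averaging map $T \colon C_c ( G ) \to C_c ( G / H )$, defined by $T \nu ( g H ) \coloneqq \int_H \nu ( g h ) \, dh$, is \emph{surjective}. The plan is first to check that $T$ is a well-defined linear map into $C_c ( G / H )$, and then, for a prescribed $\alpha \in C_c ( G / H )$, to construct an explicit preimage by dividing $\alpha$ by a suitable averaged bump function. First I would verify that the function $g \mapsto \int_H \nu ( g h ) \, dh$ is right $H$-invariant: for $h_0 \in H$ the left-invariance of the Haar measure $dh$ gives $\int_H \nu ( g h_0 h ) \, dh = \int_H \nu ( g h ) \, dh$, so the integral descends to a function $T \nu$ on $G / H$. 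Its support is contained in $p^{ G \to G / H } ( \mathrm{ supp } ( \nu ) )$, hence compact, and its continuity follows from the uniform continuity of $\nu$ together with the openness of $p^{ G \to G / H }$. Thus $T$ takes values in $C_c ( G / H )$ and manifestly sends nonnegative functions to nonnegative functions.

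The crux is a positivity lemma: for every compact $C \subseteq G / H$ there exists $\psi \in C_c ( G )$ with $\psi \geq 0$ and $T \psi > 0$ on an open neighbourhood of $C$. To obtain it I would enlarge $C$ to a compact neighbourhood $C'$ (so $C \subseteq \mathrm{ int } ( C' )$), lift $C'$ to a compact set $K \subseteq G$ with $p^{ G \to G / H } ( K ) \supseteq C'$ (such a lift exists because $p^{ G \to G / H }$ is an open surjection), and choose by Urysohn's lemma some $\psi \in C_c ( G )$ with $\psi \geq 0$ and $\psi \geq 1$ on $K$. For each $g H \in C'$ pick $k \in K$ with $k H = g H$; then $T \psi ( g H ) = \int_H \psi ( k h ) \, dh$ has a nonnegative integrand that is strictly positive near $h = e$ (since $\psi ( k ) \geq 1$), so $T \psi ( g H ) > 0$. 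As $T \psi \in C_c ( G / H )$ is continuous and strictly positive on the compact set $C'$, it is bounded below by some $\delta > 0$ there, and in particular $T \psi > 0$ on the open neighbourhood $\mathrm{ int } ( C' ) \supseteq C$.

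Given $\alpha \in C_c ( G / H )$, I would apply the lemma to $C \coloneqq \mathrm{ supp } ( \alpha )$, producing $\psi$ with $T \psi > 0$ on an open set $V \supseteq C$. Define $\beta \colon G / H \to \mathbb{ C }$ by $\beta \coloneqq \alpha / T \psi$ on $V$ and $\beta \coloneqq 0$ off $C$; these two prescriptions agree on $V \setminus C$ (where $\alpha = 0$), so $\beta$ is continuous, and $\mathrm{ supp } ( \beta ) \subseteq C$ is compact, i.e.\ $\beta \in C_c ( G / H )$. Setting $\nu \coloneqq ( \beta \circ p^{ G \to G / H } ) \cdot \psi \in C_c ( G )$, the factor $\beta \circ p^{ G \to G / H }$ is constant on each $H$-coset and hence pulls out of the fibre integral, giving $T \nu ( g H ) = \beta ( g H ) \, T \psi ( g H )$. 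This equals $\alpha ( g H )$ when $g H \in C$ by the definition of $\beta$, and equals $0 = \alpha ( g H )$ otherwise; thus $T \nu = \alpha$, proving surjectivity.

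The main obstacle I anticipate is the continuity of $\beta$ (equivalently of $\nu$) across the boundary of $\mathrm{ supp } ( \alpha )$: dividing by $T \psi$ yields a continuous function only if $T \psi$ is bounded away from zero on a full neighbourhood of $\mathrm{ supp } ( \alpha )$, not merely where $\alpha \neq 0$. This is precisely why the positivity lemma is framed with a compact neighbourhood $C'$ rather than $C$ itself, and why a uniform positive lower bound is extracted from the compactness of $C'$ instead of settling for pointwise positivity. The only non-elementary inputs are the openness of $p^{ G \to G / H }$ and the liftability of compact subsets of $G / H$ to compact subsets of $G$, both standard features of the homogeneous-space fibration.
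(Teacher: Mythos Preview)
The paper does not supply its own proof of this statement: it is recorded as a \emph{Fact} and attributed to \cite[Proposition 2.50]{MR3444405}, so there is nothing in the paper to compare against directly. Your argument is the standard one (and is essentially the proof given in the cited reference): construct a nonnegative $\psi \in C_c(G)$ whose fibre average $T\psi$ is strictly positive on a neighbourhood of $\mathrm{supp}(\alpha)$, then set $\nu \coloneqq (\alpha / T\psi) \circ p^{G \to G/H} \cdot \psi$. The steps are all correct, including the care you take to secure positivity of $T\psi$ on a full compact \emph{neighbourhood} $C'$ of $\mathrm{supp}(\alpha)$ rather than merely on $\mathrm{supp}(\alpha)$ itself, which is exactly what makes the quotient $\beta$ continuous across the boundary. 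The two auxiliary inputs you flag at the end---openness of $p^{G \to G/H}$ and the existence of a compact lift of a compact subset of $G/H$---are indeed standard and pose no difficulty.
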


Then we complete the proof of Corollary \ref{cor:modular} by using Theorem \ref{thm:main}, Theorem \ref{thm:quotient-pushforward}, and Fact \ref{fact:quotient-surjective}.

\begin{proof}[Proof of Corollary \ref{cor:modular}]

Suppose $ \phi \in C ( G ) $ satisfies $ p^{ G \to G / H }|_{ \mathrm{ supp } ( \phi ) } $ is proper.
Now we show
\begin{align}
    p_*^{ \overline{ G } \to \overline{ G } / \overline{ H } } \circ p_*^{ G \to \overline{ G } } ( \phi ( g h' ) dg )
    = \frac{ \Delta_{ \overline{ H } } ( \overline{ h' } ) p_*^{ \overline{ G } \to \overline{ G } / \overline{ H } } \circ p_*^{ G \to \overline{ G } } ( \phi ( g ) dg ) }{ \Delta_{ \overline{ G } } ( \overline{ h' } ) \Delta_H ( h' ) }. \label{eq:modular-right-action}
\end{align}
Then
\begin{align}
    p_*^{ G \to \overline{ G } } ( \phi ( g h' ) dg )
    = \int_N \phi ( g n h' ) dn d \overline{ g } \label{eq:Fact-pushforward-apply}
\end{align}
holds by Lemma \ref{lem:pushforward-represent}.
Now we let $ \alpha \in C_c ( \overline{ G } / \overline{ H } ) $.
By Fact \ref{fact:quotient-surjective}, there exists $ \nu \in C_c ( \overline{ G } ) $ such that
\begin{align}
   \alpha ( \overline{ g H } ) = \int_{ \overline{ H } } \nu ( \overline{ g h } ) d \overline{ h }
\end{align}
holds for any $ g \in G $, where $ \overline{ g H } \coloneqq p^{ G / H \to \overline{ G } / \overline{ H } } ( g H ) $.
Then
\begin{align}
   \int_{ \overline{ G } / \overline{ H } } \alpha ( \overline{ g H } ) dp_*^{ \overline{ G } \to \overline{ G } / \overline{ H } } \circ p_*^{ G \to \overline{ G } } ( \phi ( g h' ) dg )
    & = \int_{ \overline{ G } / \overline{ H } } \int_{ \overline{ H } } \nu ( \overline{ g h } ) d \overline{ h } \; dp_*^{ \overline{ G } \to \overline{ G } / \overline{ H } } \left( \int_N \phi ( g n h' ) dn d \overline{ g } \right) \\
    & = \int_{ \overline{ G } } \nu ( \overline{ g } ) \int_{ \overline{ H } } \int_N \phi ( g h n h' ) dn \frac{ \Delta_{ \overline{ G } } ( \overline{ h } ) }{ \Delta_{ \overline{ H } } ( \overline{ h } ) } d \overline{ h } d \overline{ g }
\end{align}
holds by \eqref{eq:Fact-pushforward-apply} and Theorem \ref{thm:quotient-pushforward}.
Since
\begin{align}
    \int_{ \overline{ H } } \int_N \phi ( g h n h' ) dn \frac{ \Delta_{ \overline{ G } } ( \overline{ h } ) }{ \Delta_{ \overline{ H } } ( \overline{ h } ) } d \overline{ h }
    & = \int_{ \overline{ H } } \int_N \phi ( g h n h' ) \frac{ \Delta_{ \overline{ G } } ( \overline{ h n } ) }{ \Delta_{ \overline{ H } } ( \overline{ h n } ) } dn d \overline{ h } \\
    & = \int_{ H } \phi ( g h h' ) \frac{ \Delta_{ \overline{ G } } ( \overline{ h } ) }{ \Delta_{ \overline{ H } } ( \overline{ h } ) } dh \\
    & = \int_{ H } \phi ( g h ) \frac{ \Delta_{ \overline{ G } } ( \overline{ h h'^{ - 1 } } ) }{ \Delta_{ \overline{ H } } ( \overline{ h h'^{ - 1 } } ) } dh \frac{ 1 }{ \Delta_H ( h' ) } \\
    & = \int_{ H } \phi ( g h ) \frac{ \Delta_{ \overline{ G } } ( \overline{ h } ) }{ \Delta_{ \overline{ H } } ( \overline{ h } ) } dh \frac{ \Delta_{ \overline{ H } } ( h' ) }{ \Delta_{ \overline{ G } } ( h' ) \Delta_H ( h' ) }
\end{align}
holds by Fact \ref{fact:quotient-Haar}, we have
\begin{align}
    \phantom{ = } & \int_{ \overline{ G } / \overline{ H } } \alpha ( \overline{ g H } ) dp_*^{ \overline{ G } \to \overline{ G } / \overline{ H } } \circ p_*^{ G \to \overline{ G } } ( \phi ( g h' ) dg ) \\
    = & \int_{ \overline{ G } / \overline{ H } } \alpha ( \overline{ g H } ) dp_*^{ \overline{ G } \to \overline{ G } / \overline{ H } } \circ p_*^{ G \to \overline{ G } } ( \phi ( g ) dg ) \frac{ \Delta_{ \overline{ H } } ( h' ) }{ \Delta_{ \overline{ G } } ( h' ) \Delta_H ( h' ) }
\end{align}
and hence \eqref{eq:modular-right-action}.

In particular, we have
\begin{align}
    p_*^{ G \to G / H } ( \phi ( g h' ) dg )
    = \frac{ \Delta_{ H } ( h' ) p_*^{ G \to G / H } ( \phi ( g ) dg ) }{ \Delta_{ G } ( h' ) \Delta_H ( h' ) }
    = \frac{ p_*^{ G \to G / H } ( \phi ( g ) dg ) }{ \Delta_{ G } ( h' ) }
\end{align}
by applying $ N = \{ e \} $ to \eqref{eq:modular-right-action}.
Thus, it follows from \eqref{eq:modular-right-action} and Theorem \ref{thm:main} \ref{item:main-normal-commutative} that
\begin{align}
    \frac{ \Delta_{ \overline{ H } } ( \overline{ h' } ) p_*^{ \overline{ G } \to \overline{ G } / \overline{ H } } \circ p_*^{ G \to \overline{ G } } ( \phi ( g ) dg ) }{ \Delta_{ \overline{ G } } ( \overline{ h' } ) \Delta_H ( h' ) }
    & = p_*^{ \overline{ G } \to \overline{ G } / \overline{ H } } \circ p_*^{ G \to \overline{ G } } ( \phi ( g h' ) dg ) \\
    & = p_*^{ G / H \to \overline{ G } / \overline{ H } } \circ p_*^{ G \to G / H } ( \phi ( g h' ) dg ) \\
    & = p_*^{ G / H \to \overline{ G } / \overline{ H } } \left( \frac{ p_*^{ G \to G / H } ( \phi ( g ) dg ) }{ \Delta_{ G } ( h' ) } \right) \\
    & = \frac{ p_*^{ G / H \to \overline{ G } / \overline{ H } } \circ p_*^{ G \to G / H } ( \phi ( g ) dg ) }{ \Delta_{ G } ( h' ) } \\
    & = \frac{ p_*^{ \overline{ G } \to \overline{ G } / \overline{ H } } \circ p_*^{ G \to \overline{ G } } ( \phi ( g ) dg ) }{ \Delta_{ G } ( h' ) }.
\end{align}
Therefore, we obtain \eqref{eq:modular-state}.
\end{proof}

\section*{Acknowledgement}

This work was supported by the RIKEN Special Postdoctoral Researcher (SPDR) program.
The author would like to express his sincere gratitude to Prof.~Toshiyuki Kobayashi for repeated discussions and helpful advice for this paper.
The author is also grateful to Yuichiro Tanaka, Toshihisa Kubo, and the anonymous referee for their careful comments.

\printbibliography

\noindent
Takashi Satomi: RIKEN Interdisciplinary Theoretical and Mathematical Sciences (iTHEMS), Wako, Saitama 351-0198, Japan.

\noindent
E-mail: takashi.satomi@riken.jp

\end{document}